\newcommand{\cyc}[1]{\langle#1\rangle}
\newtheorem{theorem}{Theorem}[section]
\newtheorem{lemma}[theorem]{Lemma}
\begin{document}
\title{ON A GENERALIZATION OF BAER THEOREM}
\author{L. A. KURDACHENKO}
\address{Department of Algebra, National University of Dnepropetrovsk}
\curraddr{Vul. Naukova 13, Dnepropetrovsk 50, Ukraine 49050}
\email{lkurdachenko@gmail.com}
\thanks{The authors were supported by Proyecto MTM2010-19938-C03-03 of
MICINN (Spain), the Government of Arag\'{o}n (Spain) and FEDER funds from
European Union}
\author{J. OTAL}
\address{Department of Mathematics - IUMA, University of Zaragoza}
\curraddr{Pedro Cerbuna 12, 50009 Zaragoza, Spain}
\email{otal@unizar.es}
\thanks{}
\author{I. Ya. SUBBOTIN}
\address{Department of Mathematics and Natural Sciences, National University}
\curraddr{5245 Pacific Concourse Drive, LA, CA 90045, USA}
\email{isubboti@nu.edu}
\thanks{}
\subjclass[2010]{Primary 20F14}
\date{}
\dedicatory{}

\begin{abstract}
R. Baer has proved that if the factor-group $G/\zeta_n(G)$ of a group $G$ by
the member $\zeta_n(G)$ of its upper central series is finite (here $n$ is a
positive integer) then the member $\gamma_{n+1}(G)$ of the lower central
series of $G$ is also finite. In particular, in this case, the nilpotent
residual of $G$ is finite. This theorem admits the following simple
generalization that has been published very recently by M. de Falco, F. de
Giovanni, C. Musella and Ya. P. Sysak: "If the factor-group $G/Z$ of a group 
$G$ modulo its upper hypercenter $Z$ is finite then G has a finite normal
subgroup $L$ such that $G/L$ is hypercentral". In the current article we
offer a new simpler very short proof of this theorem and specify it
substantially. In fact, we prove that if $|G/Z| = t$ then $|L|\leq t^k$,
where $k = \frac{1}{2}(log_pt+1)$, and $p$ is the least prime divisor of $t$.
\end{abstract}

\maketitle

\section{Introduction}

One of the important long-standing results in the Theory of Groups is a
classical theorem due to I. Schur \cite{SI1904}, which establishes a
connection between the factor-group $G/\zeta(G)$ of a group $G$ modulo its
center $\zeta(G)$ and the derived subgroup $[G,G]$ of $G$. It follows from
Schur's theorem \cite{SI1904} that \textit{if $G/\zeta(G)$ is finite then $%
[G,G]$ is also finite}. A natural question related to this result appears
here, namely the question regarding the relationship between the orders $%
|G/\zeta(G)|$ and $|[G,G]|$. J. Wiegold in the paper \cite{WJ1956} obtained
the following answer to this question. Let $G$ be a group such that $%
|G/\zeta(G)| = t$ is finite. J. Wiegold proved that there exists a function $%
w$ such that $|[G,G]|\leq w(t)$. He also was able to obtain for this
function the value $w(t) = t^m$ where $m = \frac{1}{2}(log_pt-1)$ and $p$ is
the least prime divisor of $t$. Later on, J. Wiegold was able to show that
this boundary value may be attained if and only if $t = p^n$ for some prime $%
p$ (\cite{WJ1965}). When $t$ has more than one prime divisor, the picture
becomes more complicated.

\medskip

Various generalizations of Schur's theorem can be found in the mathematical
literature. One of the most interesting approaches would be studying the
properties of the following question: \textit{study properties of the
factor-group $G/\zeta(G)$ such that the derived subgroup $[G,G]$ satisfies
the same property}. A class of groups $\mathfrak{X}$ is said to be \textit{a
Schur class} if for every group $G$ such that $G/\zeta(G)\in\mathfrak{X}$
the derived subgroup $[G,G]$ also belongs to $\mathfrak{X}$. Schur's classes
were introduced in the paper \cite{FGK1995}. Besides of the obvious examples
of the classes of finite and of locally finite groups, the class of
polycyclic--by--finite groups and the class of Chernikov groups are also
Schur's classes (see, for example, \cite[Theorem 3.9]{KOS2007}). In this
paper \cite{FGK1995} other Schur's classes were found as well.

\medskip

In the paper \cite{BR1952} R. Baer generalized Schur's theorem in a
different direction. We recall that \textit{the upper central series of a
group $G$} is the ascending series 
\begin{equation*}
\langle 1\rangle =
\zeta_0(G)\leq\zeta_1(G)\leq\cdots\leq\zeta_\alpha(G)\leq\zeta_{\alpha+1}(G)%
\leq\cdots\zeta_\delta(G) = \zeta_\infty(G) 
\end{equation*}
given by $\zeta_1(G) = \zeta(G)$ is the center of G, and recursively $%
\zeta_{\alpha+1}(G)/\zeta_\alpha(G) = \zeta(G/\zeta_\alpha(G))$ for all
ordinals $\alpha$ and $\zeta_\lambda(G) = \bigcup_{\mu<\lambda}\zeta_\mu(G)$
for every limit ordinal $\lambda$. The last term $\zeta_\infty(G)$ of this
series is called \textit{the upper hypercenter of $G$}. $G$ itself is called 
\textit{hypercentral} if $\zeta_\infty(G)=G$. In general, the length of the
upper central series of $G$ is denoted by $zl(G)$. On the other hand, 
\textit{the lower central series of $G$} is the descending series 
\begin{equation*}
G =
\gamma_1(G)\geq\gamma_2(G)\geq\cdots\geq\gamma_\alpha(G)\geq\gamma_{%
\gamma+1}(G)\geq\cdots 
\end{equation*}
given by $\gamma_2(G) = [G,G]$, and recursively $\gamma_{\alpha+1}(G) =
[\gamma_\alpha(G),G]$ for all ordinals $\alpha$ and $\gamma_\lambda(G) =
\bigcap_{\mu<\lambda}\gamma_\mu(G)$ for every limit ordinal $\lambda $.

\medskip

R. Baer proved that \textit{if for some positive integer $n$ the
factor-group $G/\zeta_n(G)$ is finite, then $\gamma_{n+1}(G)$ is finite too}
(\cite{BR1952}). In particular, in this case the nilpotent residual of $G$
(that is, the intersection of all normal subgroups $N$ of $G$ such that $G/N$
is nilpotent) is finite. Very recently, in the paper \cite{FGMS2011}, M. de
Falco, F. de Giovanni, C. Musella and Ya. P. Sysak obtained the following
generalization of this result:

\medskip

\noindent\textbf{Theorem A}. \textit{Let $G$ be a group and let $Z$ be the
upper hypercenter of $G$. If $G/Z$ is finite, then $G$ has a finite normal
subgroup $L$ such that $G/L$ is hypercentral}.

\medskip

In Section \ref{S2} we provide an elementary proof of this result, which is
considerably shorter than the original one.

\medskip

Just as in the theorem of Schur, the question on finding a relationship
between the factor-group $G/\zeta_\infty(G)$ and the hypercentral residual
of $G$ (the intersection of all normal subgroups $N$ of $G$ such that $G/N$
is hypercentral) appears to be very natural. More specifically, \textit{is
there a function (dependeding on the order of $G/\zeta_\infty(G)$) that
bounds the order of the hypercentral residual of G?}. In this note we show
that Theorem A can be significantly improved. We prove that the order of the
hypercentral residual of $G$ is bounded by a function of the order of $%
G/\zeta_\infty(G)$ and moreover we are able to give an explicit form of this
function. Thus the main result of the current note is the following

\medskip

\noindent\textbf{Theorem B}. \textit{Let $G$ be a group and let $Z$ be the
upper hypercenter of $G$. Suppose that $G/Z$ is finite and put $|G/Z| = t$.
Then $G$ has a finite normal subgroup $L$ such that $G/L$ is hypercentral.
Moreover, $|L|\leq t^k$, where $k = \frac{1}{2}(log_pt+1)$ and $p$ is the
least prime divisor of $t$}.

\section{A short proof of Theorem A}

\label{S2}

The proof makes use of an auxiliary result by N. S. Hekster \cite[Lemma 2.4]%
{HN1986}.

\begin{lemma}[HN1986]
\label{l1} Let $G$ be a group, $K$ a subgroup of $G$, and suppose that $G =
K\zeta_n(G)$ for some positive integer $n$. Then the following properties
holds.

\begin{enumerate}
\item $\gamma_{n+1}(G) = \gamma_{n+1}(K)$.

\item $\zeta_n(K) = K\cap\zeta_n(G)$.

\item $\gamma_{n+1}(G)\cap\zeta_n(G) = \gamma_{n+1}(K)\cap\zeta_n(K)$.
\end{enumerate}
\end{lemma}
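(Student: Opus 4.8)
The plan is to prove the three statements by induction on $n$, treating $n=1$ directly and reducing the general step to the previous one by passing to $\bar G = G/\zeta_1(G)$. Two standard facts drive everything. The first is the commutator–centre containment $[\gamma_i(G),\zeta_j(G)]\le\zeta_{j-i}(G)$ (with $\zeta_k(G)=\langle 1\rangle$ for $k\le 0$), itself a routine induction on $i$; its extreme case $[\gamma_n(G),\zeta_n(G)]=\langle 1\rangle$ says that $\zeta_n(G)$ centralizes $\gamma_n(G)$. The second is that the hypothesis survives the reduction: since $\zeta_1(G)$ is central one has $\gamma_m(K\zeta_1(G))=\gamma_m(K)$ for every $m\ge 2$, and the case $n=1$ of (2), namely $\zeta_1(K)=K\cap\zeta_1(G)$ (immediate from $[x,kc]=[x,k]^c$ when $c$ is central), gives $\bar K:=K\zeta_1(G)/\zeta_1(G)\cong K/\zeta_1(K)$. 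Together with $\zeta_n(G)/\zeta_1(G)=\zeta_{n-1}(\bar G)$ this yields $\bar G=\bar K\,\zeta_{n-1}(\bar G)$, so the inductive hypotheses apply in $\bar G$.

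For (1) the inclusion $\gamma_{n+1}(K)\le\gamma_{n+1}(G)$ is trivial. For the reverse I would apply the inductive hypothesis of (1) inside $\bar G$ to obtain $\gamma_n(G)\zeta_1(G)=\gamma_n(K)\zeta_1(G)$. Because $\zeta_1(G)$ is central, taking commutators with $G$ deletes it, whence $\gamma_{n+1}(G)=[\gamma_n(G),G]=[\gamma_n(K),G]$. Now write $G=K\zeta_n(G)$: since $\zeta_n(G)$ centralizes $\gamma_n(G)$, and $\gamma_{n+1}(K)\le\gamma_n(K)\le\gamma_n(G)$, a one-line expansion $[x,kc]=[x,k]$ for $x\in\gamma_n(K)$ and $c\in\zeta_n(G)$ collapses $[\gamma_n(K),G]$ to $[\gamma_n(K),K]=\gamma_{n+1}(K)$, giving the desired equality.

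For (2), the inclusion $K\cap\zeta_n(G)\le\zeta_n(K)$ is automatic, since an element annihilated by all length-$n$ commutators over $G$ is in particular annihilated over $K$; the content is $\zeta_n(K)\le\zeta_n(G)$. Here I would again pass to $\bar G$: using $\bar K\cong K/\zeta_1(K)$ together with $\zeta_n(K)/\zeta_1(K)=\zeta_{n-1}(K/\zeta_1(K))$, the image of $\zeta_n(K)$ in $\bar K$ is exactly $\zeta_{n-1}(\bar K)$, and the inductive hypothesis of (2) applied to $\bar G=\bar K\,\zeta_{n-1}(\bar G)$ places it inside $\zeta_{n-1}(\bar G)=\zeta_n(G)/\zeta_1(G)$; pulling back through $\zeta_1(G)\le\zeta_n(G)$ yields $\zeta_n(K)\le\zeta_n(G)$. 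Finally (3) is purely formal: substituting (2) and then (1), and using $\gamma_{n+1}(K)=\gamma_{n+1}(G)\le K$, one gets $\gamma_{n+1}(K)\cap\zeta_n(K)=\gamma_{n+1}(K)\cap(K\cap\zeta_n(G))=\gamma_{n+1}(G)\cap\zeta_n(G)$.

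The main obstacle I anticipate is not any single computation but the bookkeeping that legitimises the reduction step: one must check carefully that $G=K\zeta_n(G)$ really descends to $\bar G=\bar K\,\zeta_{n-1}(\bar G)$, which hinges on the $n=1$ case of (2) and on the quotient identities for the two central series. Once that reduction is in place, both inductions are short, and the only genuinely computational point is the commutator collapse $[x,kc]=[x,k]$ in part (1), which is exactly where the centralising property $[\gamma_n(G),\zeta_n(G)]=\langle 1\rangle$ is consumed.
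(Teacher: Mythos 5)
Your proof is correct, but there is nothing in the paper to compare it against: the authors state Lemma \ref{l1} as a quoted result of N. S. Hekster \cite[Lemma 2.4]{HN1986} and use it as a black box, offering no proof of their own. Your argument therefore supplies what the paper delegates to the literature, and it does so soundly. The induction on $n$ through $\bar G = G/\zeta_1(G)$ is legitimate because you verify the two facts on which the reduction rests: first, the base case $\zeta_1(K)=K\cap\zeta_1(G)$ of (2), which identifies $\bar K = K\zeta_1(G)/\zeta_1(G)$ with $K/\zeta_1(K)$ and, together with $\zeta_n(G)/\zeta_1(G)=\zeta_{n-1}(\bar G)$, turns the hypothesis $G=K\zeta_n(G)$ into $\bar G=\bar K\,\zeta_{n-1}(\bar G)$; second, the standard containment $[\gamma_i(G),\zeta_j(G)]\le\zeta_{j-i}(G)$, whose case $i=j=n$ drives the collapse $[x,kc]=[x,k]$ --- note that both $[x,c]=1$ and $[x,k]^c=[x,k]$ require that $\zeta_n(G)$ centralize $\gamma_n(G)$, and you correctly supply $[x,k]\in\gamma_{n+1}(K)\le\gamma_n(G)$ for the conjugation step. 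The remaining links also check out: passing from $\gamma_n(G)\zeta_1(G)=\gamma_n(K)\zeta_1(G)$ to $[\gamma_n(G),G]=[\gamma_n(K),G]$ is valid because central factors are deleted by commutation; the inclusion $K\cap\zeta_n(G)\le\zeta_n(K)$ is indeed automatic for any subgroup; and (3) follows formally from (1), (2) and $\gamma_{n+1}(K)\le K$. Your route (quotient by the center plus commutator calculus) is presumably close in spirit to Hekster's original, but within this paper your argument stands as the only self-contained proof of the lemma.
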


\noindent\textit{Proof of Theorem A}. We note that if $zl(G)$ is finite, the
result follows from Baer's theorem \cite{BR1952}. Therefore we may suppose
that $zl(G)$ is infinite. Let $K$ be a finitely generated subgroup with the
property $G = ZK$. We have that $K$ is nilpotent--by--finite (see \cite[%
Proposition 3.19]{KOS2007} for example). Since $G/Z$ is not nilpotent,
neither is $K$. Set $r = zl(K)$ and let $C$ be the upper hypercenter of $K$.
We claim that $C=C\cap Z$. For, otherwise $CZ/Z\ne\langle 1\rangle$, which
means that the upper hypercenter of $G/Z$ is not identity, a contradiction.
Then $C = C\cap Z$ as claimed. By Baer's theorem \cite{BR1952}, $%
\gamma_{r+1}(K)$ is finite. It follows that the nilpotent residual $L$ of $K$
is finite.

We now consider the local system $\mathcal{L}$ consisting of all finitely
generated subgroups of $G$ that contains $K$. Pick $V\in\mathcal{L}$ and let 
$C_V$ be the upper hypercenter of $V$. Clearly we have $G = ZV$ and then $%
C_V = V\cap Z$. Since $V\leq KZ$ and $K\leq V$, we have $V = K(V\cap Z) =
KC_V$. Put $n = zl(V)$. Since $V = KC_V$, we have that $\gamma_{n+1}(V) =
\gamma_{n+1}(K)$ by Lemma \ref{l1}. In particular, $\gamma_{n+1}(K)$ is
normal in $V$. Since $L$ is a characteristic subgroup of $\gamma_{n+1}(K)$, $%
L$ is normal in $V$. Since this holds for each $V\in\mathcal{L}$, $L$ is
normal in $G=\bigcup_{V\in\mathcal{L}}V$. We have 
\begin{equation*}
G/ZL\cong (G/L)(ZL/L) = (KZ/L)/(ZL/L) = (K/L)(ZL/L)/(ZL/L)\cong 
\end{equation*}
\begin{equation*}
\cong (K/L)/((K/L)\cap (ZL/L)). 
\end{equation*}
Since $K/L$ is nilpotent, so is $G/ZL$. Since the hypercenter of $G/L$
includes $ZL/L$, $G/L$ has to be hypercentral.\hfill $\Box$

\section{Proof of Theorem B}

\label{S3}

Let $G$ be a group, $R$ a ring and $A$ an $RG$--module. We construct \textit{%
the upper $RG$--central series of $A$} as the ascending chain of submodules 
\begin{equation*}
\{0\} = A_0\leq A_1\leq\cdots\leq A_\alpha\leq A_{\alpha+1}\cdots A_\lambda, 
\end{equation*}
where $A_1 = \zeta_{RG}(A) = \{a\in A\ |\ a(g-1) = 0\}$, $%
A_{\alpha+1}/A_\alpha = \zeta_{RG}(A/A_\alpha)$ for every ordinal $%
\alpha<\lambda$ and $\zeta_{RG}(A/A_\lambda) = \{0\}$. The last term $%
A_\lambda$ of this series is called \textit{the upper $RG$--hypercenter of $A
$} and will denoted by $\zeta_{RG}^\infty(A)$. If $A = A_\lambda$, then $A$
is said to be \textit{$RG$--hypercentral}. Moreover, if $\lambda$ is finite,
then $A$ is said to be \textit{$RG$--nilpotent}.

Let $B\leq C$ be $RG$--submodules of $A$. The factor $C/B$ is called \textit{%
$G$--eccentric} if $C_G(C/B)\ne G$. An $RG$--submodule $C$ of $A$ is said to
be \textit{$RG$--hypereccentric} if it has an ascending series of $RG$%
--submodules 
\begin{equation*}
\{0\} = C_0\leq C_1\leq\cdots\leq C_\alpha\leq C_{\alpha+1}\leq\cdots
C_\lambda = C 
\end{equation*}
such that every factor $C_{\alpha+1}/C_\alpha$ is a $G$--eccentric simple $RG
$--module.

\medskip

It is said that \textit{the $RG$--module $A$ has the $Z$--decomposition} if
we can express 
\begin{equation*}
A = \zeta_{RG}^\infty(A)\bigoplus E_{RG}^\infty(A), 
\end{equation*}
where $E_{RG}^\infty(A)$ is the maximal $RG$--hypereccentric $RG$--submodule
of $A$ (D. I. Zaitsev \cite{ZD1979}). We note that, if $A$ has the $Z$%
--decomposition, then $E_{RG}^\infty(A)$ includes every $RG$--hypereccentric 
$RG$--submodule and, in particular, it is unique. Indeed, put $%
E=E_{RG}^\infty(A)$ and let $B$ be a $RG$--hypereccentric $RG$--submodule of 
$A$. If $(B + E)/E$ is non-zero, then it has a non-zero simple $RG$%
--submodule $U/E$, say. Since $(B + E)/E \cong B/(B \cap E)$, $U/E$ is $RG$%
--isomorphic to some simple $RG$--factor of $B$ and then $G \ne C_G(U/E)$.
But $(B + E)/E \leq A/E\cong\zeta_{RG}^\infty(A)$ and then $G = C_G(U/E)$, a
contradiction that shows $B \leq E$. Hence $E$ contains the $RG$%
--hypereccentric $RG$--submodules of $A$.

\begin{lemma}
\label{l2} Let $G$ be a finite nilpotent group and let $A$ be a $\mathbb{Z}G$%
--module. Suppose that the additive group of $A$ is periodic. Then $A$ has
the $Z$--decomposition.
\end{lemma}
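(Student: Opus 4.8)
The plan is to reduce to a single prime and then exploit the coprime action made available by the nilpotency of $G$. The organizing principle is that $A$ has the $Z$--decomposition as soon as it is a sum of a $\mathbb{Z}G$--hypercentral submodule and a $\mathbb{Z}G$--hypereccentric one, and that a direct sum of hypercentral (resp. hypereccentric) $\mathbb{Z}G$--modules is again hypercentral (resp. hypereccentric). Since the additive group of $A$ is periodic, it splits as $A = \bigoplus_p A_p$ into its $p$--primary components, each of which is a $\mathbb{Z}G$--submodule because $G$ acts by automorphisms of the additive group. A $Z$--decomposition of every $A_p$ therefore assembles into one for $A$, the hypercentral pieces and the hypereccentric pieces grouping separately (a simple $\mathbb{Z}G$--factor is a $p$--group for a single prime). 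Thus I may assume $A$ is a $p$--group, and as $G$ is finite nilpotent I write $G = P \times Q$ with $P$ the Sylow $p$--subgroup and $Q$ the Hall $p'$--subgroup.

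Next I would invoke the coprime decomposition. Because $A$ is a $p$--group and $\gcd(|Q|,p) = 1$, multiplication by $|Q|$ is an automorphism of $A$, so the averaging map $e = \frac{1}{|Q|}\sum_{q\in Q} q$ is a well--defined $\mathbb{Z}G$--endomorphism and yields $A = C_A(Q) \oplus [A,Q]$ into the image $C_A(Q)$ of $e$, the submodule of $Q$--fixed points, and its kernel $[A,Q]$. Both summands are $\mathbb{Z}G$--submodules since $P$ commutes with $Q$ elementwise. On $C_A(Q)$ the group $Q$ acts trivially, so it is a module for the $p$--group $P$; here I would use that a nonzero $p$--group which is a module for a finite $p$--group has nonzero fixed points (a cyclic submodule is a finite $p$--group, and a finite $p$--group acting on a nonzero finite $p$--group fixes a nonzero element by the usual congruence modulo $p$). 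Iterating this transfinitely shows $C_A(Q)$ is $\mathbb{Z}P$--hypercentral, and since $Q$ acts trivially every central factor is $G$--central, whence $C_A(Q) \subseteq \zeta_{\mathbb{Z}G}^\infty(A)$.

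The heart of the argument is that $B = [A,Q]$ is $\mathbb{Z}G$--hypereccentric, where by construction $C_B(Q) = \{0\}$. I would build the ascending socle series of $B$ and verify that every layer is a sum of $G$--eccentric simple modules. Each simple $\mathbb{Z}G$--factor that is a $p$--group is an irreducible $\mathbb{F}_pG$--module, and since $P$ is a $p$--group acting in characteristic $p$ it acts trivially on it; hence the socle of any nonzero such module is a semisimple $\mathbb{F}_pQ$--module with $P$ trivial, and a constituent is $G$--eccentric exactly when $Q$ acts nontrivially on it. The coprime exactness of $Q$--fixed points gives $C_{B/S}(Q) = (C_B(Q)+S)/S = \{0\}$ for every $\mathbb{Z}G$--submodule $S$, so no layer $\mathrm{soc}(B/S)$ carries a nonzero $Q$--fixed, hence $G$--central, constituent. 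Each socle layer is therefore a sum of $G$--eccentric simples, and refining the layers produces the required series once I confirm that the socle series exhausts $B$; this holds because a nonzero $p$--group $\mathbb{Z}G$--module always has nonzero socle. Consequently $B \subseteq E_{\mathbb{Z}G}^\infty(A)$.

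Finally I would record the general fact that $\zeta_{\mathbb{Z}G}^\infty(A) \cap E_{\mathbb{Z}G}^\infty(A) = \{0\}$: a submodule of a hypereccentric module is hypereccentric and so has zero $G$--fixed points, whereas a nonzero hypercentral module has nonzero fixed points, so the intersection must vanish. Together with $A = C_A(Q) + [A,Q] \subseteq \zeta_{\mathbb{Z}G}^\infty(A) + E_{\mathbb{Z}G}^\infty(A)$ this forces $A = \zeta_{\mathbb{Z}G}^\infty(A) \oplus E_{\mathbb{Z}G}^\infty(A)$, which is the desired $Z$--decomposition. The main obstacle is the hypereccentricity of $[A,Q]$, resting on the two mixed--characteristic inputs it requires: that the Sylow $p$--subgroup $P$ acts trivially on every simple $p$--group factor, and that coprime $Q$--fixed points are exact on quotients, so that the vanishing $C(Q) = \{0\}$ propagates all the way up the socle series.
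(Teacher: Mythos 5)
Your proof is correct, but it takes a genuinely different route from the paper's. The paper argues by a local system: since $G$ is finite and the additive group of $A$ is periodic, $A$ is the union of a family $\mathcal{L}$ of finite $\mathbb{Z}G$--submodules; each $B\in\mathcal{L}$ has the $Z$--decomposition by Zaitsev's theorem for finite modules over finite nilpotent groups \cite{ZD1979}, these decompositions are compatible along inclusions $B\leq C$ (here the paper uses the fact, proved just before the lemma, that $E^\infty_{\mathbb{Z}G}(C)$ contains every hypereccentric submodule), and so both constituents pass to unions, giving $A=\zeta^\infty_{\mathbb{Z}G}(A)\oplus E^\infty_{\mathbb{Z}G}(A)$. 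You never invoke Zaitsev's theorem: after the primary reduction you use nilpotency through the factorization $G=P\times Q$, split $A=C_A(Q)\oplus[A,Q]$ by the coprime averaging idempotent, and identify the two summands by a $p$--group fixed-point argument (hypercentrality of $C_A(Q)$) and by the socle series together with exactness of coprime fixed points (hypereccentricity of $[A,Q]$). The paper's route is much shorter but leans on \cite{ZD1979} as a black box; yours is self-contained, in effect reproving the module-theoretic core of Zaitsev's result, and it makes explicit both where nilpotency of $G$ enters and what the decomposition actually is on each primary component (fixed points of the Hall $p'$--subgroup versus its commutator submodule). One step you should tighten: in your final paragraph you write $[A,Q]\subseteq E^\infty_{\mathbb{Z}G}(A)$ before the existence of a maximal hypereccentric submodule has been established, which is mildly circular. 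The clean fix is to set $E=[A,Q]$ and verify directly that $E$ contains every hypereccentric submodule $D$: the projection of $D$ onto $C_A(Q)$ along $E$ is hypereccentric (as a quotient of $D$) and hypercentral (as a submodule of $C_A(Q)$), hence zero by your fixed-point dichotomy, so $D\leq E$. This is the same uniqueness argument the paper records immediately before the lemma, and with it your decomposition is literally the $Z$--decomposition.
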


\begin{proof} Since $G$ is finite, $A$ has a local family $\mathcal{L}$ consisting of finite $\mathbb{Z}G$--submodules. If $B\in\mathcal{L}$, applying the results of \cite{ZD1979}, $B$ has the $Z$--decomposition. Pick now $C\in\mathcal{L}$ such that $B\leq C$. Then we have
\[
B = \zeta_{\mathbb{Z}G}^\infty(B)\bigoplus E_{ZG}^\infty(B), C = \zeta_{\mathbb{Z}G}^\infty(C)\bigoplus E_{\mathbb{Z}G}^\infty(C).
\]
Clearly $\zeta_{\mathbb{Z}G}^\infty(B)\leq\zeta_{\mathbb{Z}G}^\infty(C)$ and, since $E_{\mathbb{Z}G}^\infty(C)$ includes every $\mathbb{Z}G$--hypereccentric $\mathbb{Z}G$--submodule, $E_{\mathbb{Z}G}^\infty(B)\leq E_{\mathbb{Z}G}^\infty(C)$. It follows that
\[
\zeta_{\mathbb{Z}G}^\infty(A) = \bigcup_{B\in\mathcal{L}}\zeta_{\mathbb{Z}G}^\infty(B), E_{\mathbb{Z}G}^\infty(A) =
\bigcup_{B\in\mathcal{L}}E_{ZG}^\infty(B).
\]
Therefore $A = \zeta_{\mathbb{Z}G}^\infty(A)\bigoplus E_{\mathbb{Z}G}^\infty(A)$.\end{proof}

\begin{lemma}
\label{l3} Let $G$ be a finite group and $Z$ a $G$--invariant subgroup of
the hypercenter of $G$. Put $|G/Z| = t$. Then there exists a function $f_1$
such that the nilpotent residual of $G$ has order at most $f_1(t)$.
\end{lemma}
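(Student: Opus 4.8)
The plan is to split the nilpotent residual $W:=\gamma_\infty(G)=\gamma_m(G)$ (which stabilises since $G$ is finite) into two parts relative to $Z$ and to bound each separately. Writing $\bar G=G/Z$, the image $WZ/Z$ is exactly the nilpotent residual $\gamma_\infty(\bar G)$ of $\bar G$, so $|W/(W\cap Z)|=|WZ/Z|\le|\bar G|=t$; this is the ``easy factor''. It therefore remains to bound $|W\cap Z|$, and since $Z\le\zeta_\infty(G)$ it suffices to bound $|W\cap\zeta_\infty(G)|$ in terms of $t$ (enlarging $Z$ to $\zeta_\infty(G)$ only decreases $t$, and I will arrange $f_1$ to be non-decreasing).

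First I would record the structural fact that the two ends of the central series commute. From the standard identity $[\gamma_i(G),\zeta_j(G)]\le\zeta_{j-i}(G)$ (with $\zeta_k=\langle1\rangle$ for $k\le0$) together with $\gamma_m(G)=\gamma_\infty(G)$ for all large $m$, taking $m$ larger than the finite length of the upper central series gives $[\gamma_\infty(G),\zeta_\infty(G)]=\langle1\rangle$. In particular $M:=W\cap\zeta_\infty(G)$ is central in $W$, so there is a central extension $\langle1\rangle\to M\to W\to\bar W\to\langle1\rangle$ with $\bar W=WZ/Z$ and $|\bar W|\le t$; consequently $|W/\zeta(W)|\le|W/M|\le t$.

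Now Wiegold's theorem \cite{WJ1956} applies to $W$ itself: since $|W/\zeta(W)|\le t$ it bounds $|[W,W]|$ by at most $t^{\frac12(\log_p t-1)}$, where $p$ is the least prime divisor. The heart of the argument is then to show that the central part $M$ is already contained in $[W,W]$, so that this same estimate controls $|M|$. This is where Lemma \ref{l2} enters: one views $W/[W,W]$ as a $\mathbb ZG$-module $A$, and since $W=\gamma_\infty(G)=[G,W]$ is $G$-perfect one has $[G,A]=A$, while the image $\bar M$ of $M$ in $A$ lies in the $\mathbb ZG$-hypercentre $\zeta_{\mathbb ZG}^\infty(A)$. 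Once the action has been reduced to a nilpotent group, the $Z$-decomposition $A=\zeta_{\mathbb ZG}^\infty(A)\oplus E$ is incompatible with $[G,A]=A$ unless $\zeta_{\mathbb ZG}^\infty(A)=\{0\}$ (a nilpotently acting group strictly lowers its own hypercentral part under commutation); hence $\bar M=\{0\}$, that is $M\le[W,W]$. Combining the two factors gives $|W|=|W/M|\,|M|\le t\cdot t^{\frac12(\log_p t-1)}=t^{\frac12(\log_p t+1)}$, so one may take $f_1(t)=t^{\frac12(\log_p t+1)}$, which is precisely the bound reappearing in Theorem B.

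The step I expect to be the main obstacle is exactly the inclusion $M\le[W,W]$, i.e. bounding $|\gamma_\infty(G)\cap\zeta_\infty(G)|$ uniformly in $t$. The difficulty is that the hypercentral length of $G$ is not controlled by $t$, so neither a naive application of Wiegold's theorem through the centre $\zeta_1(G)$ nor a crude induction on $|G|$ (each chief factor inside $W$ costs only a factor $\le t$, but their number is unbounded) produces a bound; the length-free splitting furnished by the $Z$-decomposition of Lemma \ref{l2}, and the reduction of the relevant module action to a nilpotent group so that it applies, are what make the estimate independent of that length.
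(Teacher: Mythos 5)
Your argument is correct, but it takes a genuinely different route from the paper's, even though both rest on the same two external ingredients: Wiegold's bound and the $Z$--decomposition of Lemma \ref{l2}. The paper works from the top down: by Kaloujnine's stability theorem $G/C_G(Z)$ is nilpotent, so it applies Wiegold to $C=C_G(Z)$ (which satisfies $|C/\zeta(C)|\le t$), takes the $Z$--decomposition of the $\mathbb{Z}G$--module $C/[C,C]$, bounds the \emph{eccentric} summand by $t$, and lets $L$ be its preimage, so that $G/L$ is nilpotent and the residual is swallowed by $L$. You work from the bottom up, with the residual $W=\gamma_\infty(G)$ itself: the identity $[\gamma_i(G),\zeta_j(G)]\le\zeta_{j-i}(G)$ plays the role Kaloujnine's theorem plays in the paper, giving $M=W\cap\zeta_\infty(G)\le\zeta(W)$; Wiegold is applied to $W$ rather than to $C$; and the $Z$--decomposition is used in the dual way -- on $A=W/[W,W]$ the equality $[G,A]=A$ forces the \emph{hypercentral} summand to vanish, whence $M\le[W,W]$. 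Both computations end in the identical count $t\cdot w(t)=t^{\frac{1}{2}(\log_p t+1)}$, but yours bounds the residual itself and yields the clean structural byproduct $\gamma_\infty(G)\cap\zeta_\infty(G)\le[W,W]\cap\zeta(W)$, while the paper's hands over explicitly the normal subgroup $L$ with $G/L$ nilpotent. Two points you leave implicit should be said aloud: (i) Lemma \ref{l2} requires a finite \emph{nilpotent} acting group, and your action does factor through one, because $W$ acts trivially on its own abelianization $A$ and $G/W$ is nilpotent ($G$ being finite), so the decomposition of $A$ over $G$ and over $G/W$ is the same; moreover the step ``$[G,Z_A]=Z_A$ forces $Z_A=0$'' uses that $A$ is finite, hence of finite hypercentral length (it fails for infinite modules, e.g.\ a Pr\"ufer group with a suitable action); (ii) Wiegold's estimate is applied to $s=|W/\zeta(W)|$, which may be smaller than $t$, so one needs $w(s)\le w(t)$ -- harmless here because $s$ divides $|W/(W\cap Z)|=|WZ/Z|$, which divides $t$, and the paper glosses over the very same point when it passes from $|C/\zeta(C)|\le t$ to $|[C,C]|\le w(t)$.
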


\begin{proof} The subgroup $Z$ has a series of $G$--invariant subgroups
\[
\cyc{1} = Z_0\leq Z_1\leq\cdots\leq Z_n\leq Z_{n+1} = Z
\]
whose factors $Z_{j+1}/Z_j$ are $G$--central. Applying a result due to L. A. Kaloujnine \cite{KL1953}, the factor-group $G/C_G(Z)$ is nilpotent. Put $C = C_G(Z)$ so that $Z\leq C_G(C)$. In particular, $|G/C_G(C)|\leq t$. Clearly $C\cap Z\leq\zeta(C)$ and so $C/(Z\cap C)\cong CZ/Z$ is a finite group of order at most $t$. By Wiegold's theorem \cite{WJ1956}, the derived subgroup $D = [C,C]$ has order at most $w(t)$. We note that $D$ is $G$--invariant and $C/D$ is abelian. By the facts proved above, the factor-group $(G/D)/C_{G/D}(C/D)$ is nilpotent. By Lemma \ref{l2}, the $\mathbb{Z}G$--module $C/D$ has the
$Z$--decomposition, that is $C/D = \zeta_{\mathbb{Z}G}^\infty(C/D)\bigoplus E_{RG}^\infty(C/D)$. Clearly, $(C\cap Z)D/D\leq\zeta_{\mathbb{Z}G}^\infty(C/D)$ and then $L/D = E_{\mathbb{Z}G}^\infty(C/D)$ has order at most $t$. Hence $(C/D)/(L/D)$ is $\mathbb{Z}G$--hypercentral. In other words, the hypercenter of $G/L$ contains $C/L$. Since $G/C$ is nilpotent so is $G/L$.
Finally, $|L| = |D| |L/D|\leq tw(t)= tt^m=t^{m+1}$, where $m = \frac{1}{2}(log_p t-1)$ and $p$ is the least prime divisor of $t$, so that $m + 1 =
\frac{1}{2}(log_p t-1) + 1 = \frac{1}{2}(log_p t + 1)$. Therefore, it suffices to put $f_1(t) = t^k$, where $k = \frac{1}{2}(log_p t + 1)$ and $p$ is the least prime divisor of $t$.\end{proof}

\medskip

If $G$ is a group, we denote by $Tor(G)$ the maximal periodic normal
subgroup of $G$. $Tor(G)$ is a characteristic subgroup of $G$ and, if $G$ is
locally nilpotent, $G/Tor(G)$ is torsion-free.

\begin{lemma}
\label{l4} Let $G$ be a finitely generated group and $Z$ a $G$--invariant
subgroup of the hypercenter of $G$. Suppose that $|G/Z| = t$ is finite. Then 
$G$ has a finite normal subgroup $L$ such that $G/L$ is nilpotent. Moreover, 
$|L|\leq f_1(t)$.
\end{lemma}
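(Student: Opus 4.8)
The plan is to reduce the statement to the finite case already settled in Lemma \ref{l3} and then to exploit residual finiteness to pass back from finite quotients to $G$ itself. First I would record the structure of $G$. Since $[G:Z]=t$ is finite and $G$ is finitely generated, the subgroup $Z$ is itself finitely generated. As $Z$ lies in the upper hypercenter of $G$, which is locally nilpotent, $Z$ is a finitely generated locally nilpotent group and hence nilpotent; in particular $Z$ is polycyclic. Consequently $G$ is polycyclic-by-finite, so by a theorem of Hirsch it is residually finite and satisfies the maximal condition on subgroups.

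Let $L$ denote the nilpotent residual of $G$. Because $G$ satisfies the maximal condition, its lower central series stabilizes, so $\gamma_{c+1}(G)=\gamma_{c+2}(G)=\cdots=L$ for some $c$, and then $\gamma_{c+1}(G/L)=\cyc{1}$; thus $G/L$ is nilpotent and $L$ is the smallest normal subgroup with nilpotent quotient. It remains to prove that $L$ is finite with $|L|\leq f_1(t)$.

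Next I would pass to the finite quotients. Let $M$ run over the normal subgroups of finite index in $G$; by residual finiteness $\bigcap_M M=\cyc{1}$. Fix such an $M$ and set $\bar G=G/M$ and $\bar Z=ZM/M$. Then $\bar Z$ is a $\bar G$-invariant subgroup of the hypercenter of $\bar G$, and $|\bar G/\bar Z|=[G:ZM]$ divides $t$. Since the lower central series of $G$ stabilizes at $L$, we have $\gamma_n(\bar G)=\gamma_n(G)M/M=LM/M$ for all large $n$, so the nilpotent residual of the finite group $\bar G$ is exactly $LM/M$. Applying Lemma \ref{l3} to $\bar G$ (the relevant index being at most $t$, and $f_1$ being nondecreasing) gives $|LM/M|=|L/(L\cap M)|\leq f_1(t)$.

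Finally I would extract a single $M$ that trivializes $L$. The subgroups $L\cap M$ form a filter, so the integers $|L/(L\cap M)|$ are directed upward and bounded by $f_1(t)$; choose $M^{\ast}$ for which $|L/(L\cap M^{\ast})|$ is maximal. For any finite-index normal $M$ the surjection $L/(L\cap(M\cap M^{\ast}))\to L/(L\cap M^{\ast})$ together with maximality forces $L\cap M^{\ast}=L\cap M\cap M^{\ast}\leq M$; hence $L\cap M^{\ast}\leq\bigcap_M M=\cyc{1}$. Therefore $L\cong L/(L\cap M^{\ast})$ is finite of order at most $f_1(t)$, as required. The hard part will be this last limiting step, turning a uniform bound on all finite quotients of $L$ into the finiteness of $L$ itself, together with the subsidiary checks that $f_1$ is nondecreasing and that the nilpotent residual of $\bar G$ is genuinely $LM/M$.
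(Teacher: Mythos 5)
Your overall strategy --- pass to the finite quotients $G/M$ via residual finiteness, apply Lemma \ref{l3} there, and then use the uniform bound $f_1(t)$ together with a maximality choice of $M^{\ast}$ to force $L\cap M^{\ast}=\cyc{1}$ --- is genuinely different from the paper's proof, which instead embeds $G$ by Remak's theorem into $C\times B$ with $C=G/V$ finite and $B=G/K$ torsion-free nilpotent, and applies Lemma \ref{l3} only to $C$. Your final limiting step is correct, and the monotonicity check on $f_1$ does work out (a divisor $s>1$ of $t$ has least prime divisor $p_s\geq p$, so $\log_{p_s}s\leq\log_p t$ and $f_1(s)\leq f_1(t)$). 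However, there is one genuine gap, and it sits exactly under the half of the conclusion asserting that $G/L$ is nilpotent: you claim that ``because $G$ satisfies the maximal condition, its lower central series stabilizes.'' The maximal condition controls \emph{ascending} chains; the lower central series is a \emph{descending} chain, and max does not make it stabilize. The infinite dihedral group $D=\cyc{a,t\ |\ t^2=1,\ a^t=a^{-1}}$ is polycyclic, yet $\gamma_{n+1}(D)=\cyc{a^{2^n}}$ decreases strictly forever; $D$ is residually nilpotent but not nilpotent, so for a group with max the quotient by the nilpotent residual need not be nilpotent. Without stabilization, your identity $\gamma_n(\bar G)=LM/M$ degenerates to the inclusion $LM/M\leq\gamma_n(\bar G)$ (which is actually all your bound needs, since it still gives $|L/(L\cap M)|\leq f_1(t)$), but --- more seriously --- you are left with no proof at all that $G/L$ is nilpotent: your remaining argument shows only that $L$ is finite, and finite-by-(residually nilpotent) does not imply finite-by-nilpotent.

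The gap is repairable, but only by invoking Baer's theorem, which is precisely where the paper's proof begins. Since $Z$ is finitely generated nilpotent of finite index, $G$ satisfies max, so the \emph{upper} central series of $G$ (an ascending chain) terminates at some $\zeta_n(G)$ with $n$ finite, and $G/\zeta_n(G)$ is finite, being an image of $G/Z$. Baer's theorem \cite{BR1952} then gives that $\gamma_{n+1}(G)$ is finite; from that point on the lower central series is a descending chain of \emph{finite} subgroups, hence stabilizes at $L$, so $G/L$ is nilpotent and $L$ is indeed the nilpotent residual. With this step inserted, the rest of your argument goes through and yields $|L|\leq f_1(t)$ as required.
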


\begin{proof} Since $G/Z$ is finite, $Z$ is finitely generated. It follows that $Z$ is nilpotent. Moreover, $zl(G)$ is finite. By Baer's theorem \cite{BR1952}, $G$ has a finite normal subgroup $F$ such that $G/F$ is nilpotent.  Being finitely generated, $G/F$ has a finite periodic part $Tor(G/F) = K/F$. As we remarked above, the factor-group $(G/F)/(K/F)\cong G/K = B$  is torsion-free and nilpotent. We have that the subgroup $Z$ is nilpotent and $T = Tor(G)$ is finite. Then $Z$ has a torsion-free normal subgroup $U$ such that the orders of the elements of $Z/U$ are the divisors of some positive integer $k$ (see \cite[Proposition 2]{HK1995} for example). Put $V = Z^k$ so that $V\leq U$ and $V$ is also torsion-free. By construction, $V$ is $G$--invariant and $G/V$ is periodic. Being finitely generated nilpotent--by--finite, $C = G/V$ is finite. By Lemma \ref{l3}, the nilpotent residual $D$ of $C$ has order at most $f_1(t)$.

Clearly $V\cap K = \cyc{1}$. Applying Remak's theorem, we obtain an embedding $G\leq G/V\times G/K = C\times B = H$. Since $B$ is torsion-free nilpotent, the nilpotent residual of $H$ is exactly $D$. It follows that $G/(G\cap D)\cong GD/D\leq H/D$ is nilpotent. This shows that $G\cap D$ includes the nilpotent residual $L$ of $G$. In particular, $L$ is finite and moreover $|L|\leq |G\leq D|\leq |D|\leq f_1(t)$.\end{proof}

\medskip

We are now in a position to show the main result of this paper

\medskip

\noindent\textit{Proof of Theorem B}. Since $G/Z$ is finite, there exists a
finitely generated subgroup $K$ such that $G = KZ$. We pick the family $%
\Sigma$ of all finitely generated subgroups of $G$ that contains $K$.
Clearly $G$ is $FC$--hypercentral and then every finitely generated subgroup
of $G$ is nilpotent--by--finite (see \cite[Proposition 3.19]{KOS2007} for
example). If $U\in\Sigma$, then the hypercenter of $U$ includes a $U$%
--invariant subgroup $U\cap Z = Z_U$ such that $U/Z_U$ is nilpotent and has
order at most $t$. By Lemma \ref{l4}, $U$ has a finite normal subgroup $H_U$
such that $U/H_U$ is nilpotent and $|H_U|\leq f_1(t)$. Being
finite-by--nilpotent, the nilpotent residual $L_U$ of $U$ is finite and $L_U$
has order at most $f_1(t)$.

Pick $Y\in\Sigma$ such that $|L_Y|$ is maximal and let $\Sigma_1$ be the
family of all finitely generated subgroups of $G$ that contains $Y$. Pick $%
U\in\Sigma_1$. Then $Y\leq U$. The factor-group $U/L_U$ is nilpotent and,
since $Y/(Y\cap L_U)\cong YL_U/L_U\leq U/L_U$, $Y/(Y\cap L_U)$ is nilpotent.
It follows that $L_Y\leq Y\cap L_U$ and then $L_Y\leq L_U$. But $|L_Y|$ is
maximal, so that $L_U = L_Y$. In particular, $L_Y$ is normal in $U$ for
every $U\in\Sigma_1$. Then $L_Y$ is normal in $\bigcup_{U\in\Sigma_1}U = G$
and $U/L_Y$ is nilpotent. Thus $G/L_Y$ has a local family of nilpotent
subgroups, that is $G/L_Y$ is locally nilpotent. Then $(G/L_Y)/(ZL_Y/L_Y)$
is nilpotent since it is finite. It follows that $G/L_Y$ is hypercentral,
because the upper hypercenter of $G/L_Y$ includes $ZL_Y/L_Y$.\hfill $\Box$

\end{document}